\newtheorem{theorem}{Theorem}[section]
\newtheorem{lemma}[theorem]{Lemma}
\newtheorem{proposition}[theorem]{Proposition}
\newtheorem{corollary}[theorem]{Corollary}
\theoremstyle{definition}
\newtheorem{example}[theorem]{Example}
\theoremstyle{remark}
\newtheorem{remark}[theorem]{Remark}
\def\Fq{{\mathbb F}_q}
\def\AA{{\mathbb A}}
\def\FF{{\mathbb F}}
\def\PP{{\mathbb P}}
\def\bFq{\overline{\FF}_q}
\newcommand{\C}{\mathbb{C}}
\newcommand{\In}{\mathrm{in}_{\preccurlyeq}}
\newcommand{\Zv}{\mathsf{Z}}
\newcommand{\V}{\mathsf{V}}
\def\PP{{\mathbb P}}
\def\Z{{\mathbb Z}}
\def\Zv{{\mathsf Z}}
\def\Ii{{\mathsf I}}
\def\Gqk{{\Gamma}_q(k)}
\def\Gpqk{{\Gamma}_q^*(k)}
\newcommand{\Mon}{\mathcal{M}}
\newcommand{\Monp}{\mathbb{M}}
\newcommand{\Red}{{\mathfrak{R}}}
\newcommand{\Redp}{{\mathfrak{R}}}
\newcommand{\Sr}{\mathcal{S}}
\newcommand{\I}{\mathcal{I}}
\newcommand{\kn}{{k[X_1, \dots , X_n]}}
\begin{document}

\title[Nullstellensatz over Finite Fields]{A Note on Nullstellensatz over Finite Fields}

\author{Sudhir R. Ghorpade}
\address{Department of Mathematics, 
Indian Institute of Technology Bombay,\newline \indent
Powai, Mumbai 400076, India.}
\email{srg@math.iitb.ac.in}
\thanks{Partially supported by the 
IRCC Award grant 12IRAWD009 from IIT Bombay.}


\date{March 12, 2018} 

\begin{abstract}
We give an expository account of Nullstellensatz-like results when the base field is finite. In particular, we discuss the vanishing ideal of the affine space and of the projective space over a finite field.  As an application, we include an alternative proof of Ore's inequality for the number of points of affine hypersurfaces over finite fields. 
\end{abstract}

\maketitle


\section{Introduction}
\label{sec:in}

Hilbert's Nullstellensatz, or Hilbert's Zero Point Theorem, is a classical result of fundamental importance in commutative algebra and algebraic geometry. 
This result is only valid when the base field 
is $\C$, the field of complex numbers, or more generally an algebraically closed field. In fact, when 
the base field is $\C$, 
it can be viewed as a remarkable generalization of the Fundamental Theorem of Algebra. There are two versions, commonly known as Weak Nullstellensatz and Strong Nullstellensatz, which can be stated as follows. 
\smallskip

{\bf Weak Nullstellensatz:} Let $k$ be an algebraically closed field. If 
$I$ is a nonunit ideal of the polynomial ring $\kn$, then $I$ has a `zero', i.e., 
there exists $(\alpha_1, \dots ,\alpha_n) \in k^n$ such that 
$f(\alpha_1, \dots ,\alpha_n) =0$, for each $f\in I$. 

This result is usually deduced from the assertion, also referred to as a Nullstellensatz, that every 
maximal ideal of $\kn$ is of the form $(X_1-\alpha_1, \dots , X_n - \alpha_n)$ 
for some $\alpha_1, \dots ,
\alpha_n \in k$, provided of course $k$ is algebraically closed. On the other hand, the Weak Nullstellensatz together with a well-known ``trick of Rabinowitsch", implies the following version. 
\smallskip

{\bf Strong Nullstellensatz:} Let $k$ be an algebraically closed field and let $f_1, \dots , f_m$ be any polynomials in $\kn$. If $f\in \kn$ is such that $f$ vanishes at every common zero in $k^n$ of $f_1, \dots , f_m$, then 
$$
f^r = g_1f_1+ \cdots + g_mf_m \quad \text{for some } g_1, \dots ,g_m \in \kn \text{ and } r \ge 0. 
$$

The above statement is close to the original version of the theorem 
as it appears in Hilbert's 1893 paper \cite[\S 3]{H} on the complete systems of invariants. Hilbert calls this 
a \emph{third general theorem in the theory of algebraic functions, continuing Theorems I and III} of his 1890 paper \cite[\S I,  III]{H1} on the theory of algebraic forms. These latter theorems being what we now call Hilbert's basis theorem and Hilbert's syzygy theorem. A translation into English of Hilbert's papers on invariant theory is now available (cf. \cite{HA}) and one can access more easily the writings of a master. 
Nowadays, (Strong) Nullstellensatz is more commonly stated in the language of vanishing ideals of affine algebraic varieties and we recall this version in \S\,\ref{subsec:2.1} 
below. We also recall
analogous result for projective varieties that one calls Projective Nullstellensatz. Most modern textbooks on commutative algebra contain a proof of Hilbert's Nullstellensatz, and we refer to Eisenbud's book \cite{E} which has five different proofs, and to \cite{Z,Mun, May, Arr} for a sampling of alternative proofs. 
See also the article by Goel, Patil and Verma \cite{GPV} in this volume and the older article by 
Laksov \cite{L} where some variations of Hilbert's Nullstellensatz are discussed. 

A trivial consequence of Strong Nullstellensatz is that if $k$ is an algebraically closed field and if $f\in \kn$ vanishes on all of $k^n$, then $f$ is the zero polynomial. We will refer to it as {\bf Very Weak Nullstellensatz}, or in short, \textbf{VWN}. To deduce it from Strong Nullstellensatz, it suffices to take $m=1$ and $f_1$ to be the zero polynomial. The VWN  is, in fact, valid if $k$ is any infinite field, and can be proved easily using induction on $n$ and noting that a polynomial in one variable of degree $d$ with coefficients in a field $k$ has at most $d$ roots in $k$. 

On the other hand, even the VWN is not true if the base field is finite, say the finite field $\Fq$ with $q$ elements. Indeed, there are nonzero polynomials 
such as $X_i^q - X_i$ that vanish on all of $\Fq^n$. Nonetheless one has the following result, which may be viewed as an analogue of  (Very Weak) Nullstellensatz over finite fields. 
\smallskip

\textbf{Affine $\Fq$-Nullstellensatz:} Let  $f \in \Fq[X_1, \dots , X_n]$ and let $\Gamma_q$ denote the ideal of $\Fq[X_1, \dots , X_n]$ generated by $X_1^q-X_1, \dots , X_n^q - X_n$.
Then:
\begin{enumerate}
\item[(i)] $f$ vanishes at every point of $\Fq^n$ if and only if $f\in \Gamma_q$.
\item[(ii)] 
Let $f_1, \dots , f_m$ be polynomials in  $\Fq[X_1, \dots , X_n]$. 
If $f$ vanishes at every common zero of $f_1, \dots , f_m$ in $\Fq^n$,  then 
$$
f = g_1f_1+ \cdots + g_mf_m +\gamma \quad \text{for some } g_1, \dots ,g_m \in \Fq[X_1, \dots , X_n]   \text{ and }  \gamma \in \Gamma_q. 
$$
 \end{enumerate}
Note that (i) is a special case of (ii) and also that in (ii), one doesn't have to take a power of $f$ (unlike in Strong Nullstellensatz). In other words, if $I$ is an ideal of $\Fq[X_1, \dots , X_n]$, then $I + \Gamma_q$ is automatically a radical ideal of $\Fq[X_1, \dots , X_n]$. The above result is not new and goes back at least to Terjanian \cite{Terj}. An excellent account is available in the article (in French) of Joly \cite{Joly}. A more modern reference is Kreuzer and Robbiano \cite[\S\,6.2A]{KR}. However, in the experience of the author, the result is not as widely known as it should be. Moreover, an analogue of (i) in the projective case that gives an explicit description of homogeneous polynomials that vanish on all of $\PP^n(\Fq)$, appears to be known to even fewer algebraists. This is of a relatively recent vintage and can be attributed 
to Mercier and Rolland \cite{MR}. We thus provide in this article a self-contained account of these Nullstellensatz-like results in the setting of finite fields. Our proof of the projective analogue of (i) above uses the notion of projective reduction developed in \cite{BDG} and is a little simpler than the original proof of Mercier and Rolland. We will also point out that 
a straightforward analgoue of (ii) in the projective case is not possible.  In the affine case, the Affine 
$\Fq$-Nullstellensatz can be useful to deduce the so called affine $\Fq$-footprint bound for estimating the number of $\Fq$-rational points of affine algebraic varieties defined over $\Fq$. We outline this and show how such a bound can be used to deduce a classical inequality for the number of points of affine hypersurfaces defined over $\Fq$. 

As indicated in the abstract, this is an expository article, and we have made an attempt to keep it fairly self-contained. The results given here are not new, but are somewhat scattered in the literature not all of which is easily available in English. The proofs given here of some of the results (especially in Section~\ref{sec:proj}) appear to be simpler and more natural than those available elsewhere in the literature. 

%
%

\section{The Affine Case}
\label{sec:affine}

In the first subsection, we set up some basic notation and recall preliminaries about affine varieties. The notion of reduced polynomials is discussed in the next subsection and a couple of auxiliary results are proved here. These are then used in \S\,\ref{subsec:2.3} to prove the result described in the Introduction as the Affine $\Fq$- Nullstellensatz, and, in fact, a slightly more general version of it. Finally, in \S\,\ref{subsec:2.4}, we give an application to a classical inequality due to Ore.  Our exposition in 
subsections \ref{subsec:2.2} and \ref{subsec:2.3} closely follows Joly \cite[Ch. 2]{Joly}, while the proof of Lemma~\ref{lem:AFB} in \S\,\ref{subsec:2.3} is adapated from Carvalho's notes \cite[\S\,3]{Car}. 

\subsection{Preliminaries}
\label{subsec:2.1}
Let $k$ be a field and let $n$ be a nonnegative integer. Also, let $S:=k[X_1, \dots, X_n]$ be the ring of polynomials in $n$ variables $X_1, \dots , X_n$ with coefficients in $k$. We denote by $\AA^n_k$ (or simply, $\AA^n$ if the reference to $k$ is clear from the context) the space of $n$-tuples of elements of $k$. Given any $I \subseteq S$, we let 
$$
\Zv(I):=\{ \mathbf{a} = (a_1, \dots , a_n) \in \AA^n_{k} : f(a_1, \dots , a_n) = 0 \text{ for all } f \in I\}.
$$
Given a subfield $F$ of $k$, we call a subset $Z$ of $\AA^n_k$ an \emph{affine algebraic variety} defined over $F$ if $Z = \Zv(I)$ for some $I\subseteq F[X_1, \dots , X_n]$. This is equivalent to saying that $Z = \Zv(I)$ for some ideal $I$ of $S$ generated by finitely many polynomials in $F[X_1, \dots , X_n]$. 
If $Z$ is an affine algebraic variety defined over $F$ and if $K$ is a field extension of $F$ such that 
 $K$ is a subfield of an algebraic closure $\overline{k}$ of $k$ (so that $F, K$ and $k$ are subfields of $\overline{k}$), then we denote by $Z(K)$ the set of $K$-rational points of $Z$, i.e., $Z(K):=\{\mathbf{a} \in \AA^n_{K} : f(\mathbf{a}) = 0 \text{ for all } f \in I\}$. Given a subset $Z$ of $\AA^n_k$, we let 
$$
\Ii(Z) : = \{ f\in S : f( \mathbf{a}) = 0 \text{ for all }  \mathbf{a}\in Z\},
$$
and we note that $\Ii(Z)$ is an ideal of $S$; it is called the \emph{vanishing ideal} of $Z$. It is not difficult to see that if $Z$ is an affine algebraic variety defined over $k$, then $\Zv(\Ii(Z)) = Z$. On the other hand, 
Strong Nullstellensatz can be stated 
as follows.
\begin{equation}
\label{SN}
\Ii(\Zv (I) ) = \sqrt{I} \quad \text{ if $k$ is algebraically closed and $I$ is any ideal of $S$}.
\end{equation}
By considering an ideal $I$ of $S$ such that $\Zv(I) = \emptyset$, we obtain the Weak Nullstellensatz, whereas by considering the special case $\Zv(I) = \AA^n_k$, we can deduce the VWN.
As noted in the Introduction, the latter is valid (and rather easily proved) more generally when $k$ is an infinite field that is not necessarily algebraically closed. 

\subsection{Reduction}
\label{subsec:2.2}
Fix a finite field $\Fq$ with $q$ elements (so that $q$ is 
a prime power) and an algebraic closure $\overline{\FF}_q$ of $\Fq$ (in fact, we can take $\bFq$ to be $\cup_{n\ge 1} \FF_{q^n}$). Let $k$ be a subfield of $\bFq$ containing $\Fq$ (or equivalently, $k$ is an algebraic extension of $\Fq$). As before, let $S:=k[X_1, \dots , X_n]$. 

A polynomial $f\in S$ is said to be \emph{reduced} if $\deg_{X_i} f \le q-1$ for each $i=1, \dots , n$. The set of all reduced polynomials in $S$ will be denoted by $\Red$. Clearly, $\Red$ is a vector space over $k$ and the monomials $X_1^{i_1} \cdots X_n^{i_n}$, where $0 \le i_j \le q-1\text{ for } j=1, \dots , n$, 
form a $k$-basis of $\Red$. In particular, $\dim \Red = q^n$. Note that if $n=0$, then $\Red = S = k$. 

\begin{lemma}
\label{lem1}
If $f\in \Red$ is such that $f(\mathbf{a}) = 0$ for all $\mathbf{a}\in \AA^n(\Fq)$, then $f$ is the zero polynomial. In other words, $\Red \cap \Ii( \AA^n(\Fq)) = \{0\}$.
\end{lemma}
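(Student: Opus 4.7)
The plan is to prove this by induction on the number of variables $n$, using the elementary fact that a nonzero polynomial of degree $d$ in one variable over any field has at most $d$ roots.

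For the base case $n=1$, a reduced polynomial $f\in k[X_1]$ has degree at most $q-1$. If $f$ vanishes on all of $\Fq$, it has at least $q$ roots in $k$, which forces $f=0$. (For $n=0$, the claim is trivial since $\Red=k$ and $f$ vanishes at the unique point of $\AA^0$.)

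For the inductive step, suppose the result holds for $n-1$ variables, and let $f\in k[X_1,\dots,X_n]$ be reduced and vanishing on $\Fq^n$. I would expand $f$ as a polynomial in $X_n$ with coefficients in $k[X_1,\dots,X_{n-1}]$, writing
\[
f = \sum_{i=0}^{q-1} f_i(X_1,\dots,X_{n-1})\, X_n^i,
\]
where each $f_i$ is reduced in the variables $X_1,\dots,X_{n-1}$ (since $\deg_{X_j} f \le q-1$ for every $j$). For any fixed $\mathbf{a}' = (a_1,\dots,a_{n-1}) \in \Fq^{n-1}$, the one-variable polynomial $g(X_n) := f(\mathbf{a}',X_n)$ has degree at most $q-1$ and vanishes at every element of $\Fq$, so by the base case $g \equiv 0$. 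This forces $f_i(\mathbf{a}')=0$ for each $i=0,\dots,q-1$.

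Since $\mathbf{a}'$ was arbitrary in $\Fq^{n-1}$, each $f_i$ is a reduced polynomial in $n-1$ variables vanishing on $\AA^{n-1}(\Fq)$. The inductive hypothesis yields $f_i = 0$ for all $i$, hence $f=0$. I do not anticipate a significant obstacle here; the main subtlety to keep in mind is that although $k$ may be a proper extension of $\Fq$, the one-variable factor argument only needs that $k$ is a field, so the reduction mechanism goes through unchanged.
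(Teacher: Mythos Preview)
Your proof is correct and follows essentially the same inductive argument as the paper: expand $f$ in powers of $X_n$, fix the first $n-1$ coordinates in $\Fq^{n-1}$ to reduce to a univariate polynomial of degree at most $q-1$ with $q$ roots, conclude each coefficient vanishes on $\AA^{n-1}(\Fq)$, and apply the induction hypothesis. The only cosmetic differences are that the paper starts the induction at $n=0$ and indexes the coefficients in descending powers of $X_n$.
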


\begin{proof}
The case when $n=0$ 
is trivial. Suppose $n>0$ and the result holds for polynomials in $n-1$ variables. Let $f\in \Red$ be such that $f(\mathbf{a}) = 0$ for all $\mathbf{a}\in \AA^n(\Fq)$. Since $f$ is reduced, we can write $f= f(X_1, \dots , X_n)$ as 
$$
f = f_0X_n^{q-1} + f_1X_{n}^{q-2} + \cdots + f_{q-1} \ \text{ where } f_i \in k[X_1, \dots , X_{n-1}] \text{ for } i=0,1, \dots , q-1.
$$
Now for any fixed $(a_1, \dots , a_{n-1})\in \AA^{n-1}(\Fq)$, the 
polynomial $f(a_1, \dots , a_{n-1}, X_n)$ in $k[X_n]$ has degree $\le q-1$ and has at least $q$ roots in $k$. Hence it must be the zero polynomial. Consequently, $f_i(a_1,  \dots , a_{n-1}) = 0$ for all $i=0,1, \dots , q-1$ and 
$(a_1, \dots , a_{n-1})\in \AA^{n-1}(\Fq)$. So by the induction hypothesis, each $f_i$ is the zero polynomial, and therefore, so is $f$. 
\end{proof}

Now let us define $\Gqk$ to be the ideal of $S=k[X_1, \dots , X_n]$ generated by $X_1^q - X_1, \dots , X_n^q - X_n$. 
Clearly, $\Gqk \subseteq  \Ii( \AA^n(\Fq))$, and so 
Lemma~\ref{lem1} implies that 
\begin{equation}
\label{eq:RG}
\Red \cap \Gqk = \{0\}.
\end{equation}

\begin{lemma}
\label{lem2}
Every $f\in S$ can be uniquely written as $f = g + \gamma$ for some 
$g\in \Red$ and $\gamma \in \Gqk$. In other words, $S = \Red \oplus \Gqk$. 
\end{lemma}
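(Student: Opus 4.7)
The plan is to treat uniqueness and existence separately, with uniqueness being immediate from (\ref{eq:RG}) and existence being a reduction argument using the generators $X_i^q - X_i$ of $\Gqk$.

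For uniqueness, suppose $f = g_1 + \gamma_1 = g_2 + \gamma_2$ with $g_1, g_2 \in \Red$ and $\gamma_1, \gamma_2 \in \Gqk$. Then $g_1 - g_2 = \gamma_2 - \gamma_1$ lies in $\Red \cap \Gqk$, which is $\{0\}$ by (\ref{eq:RG}). Hence $g_1 = g_2$ and $\gamma_1 = \gamma_2$.

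For existence, it suffices, by $k$-linearity, to show that every monomial $X_1^{a_1}\cdots X_n^{a_n}$ is congruent modulo $\Gqk$ to a reduced polynomial. The key observation is that $X_i^q \equiv X_i \pmod{\Gqk}$, so whenever $a_i \ge q$, we have
$$
X_i^{a_i} \;=\; X_i^{a_i - q}\bigl(X_i^q - X_i\bigr) + X_i^{a_i - q + 1} \;\equiv\; X_i^{a_i - q + 1} \pmod{\Gqk}.
$$
Each such replacement strictly decreases the exponent of $X_i$ (since $q \ge 2$), while leaving the other exponents unchanged. I would make this rigorous by inducting on the quantity $\sum_{i=1}^n \max(a_i - (q-1),\, 0)$, which measures how far the monomial is from being reduced; each application of the substitution above strictly decreases this quantity, and when it reaches $0$ the monomial is reduced. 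By linearity, any $f \in S$ differs from a reduced polynomial $g$ by an element of $\Gqk$, giving the decomposition $f = g + \gamma$.

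The step requiring the most care is simply keeping bookkeeping honest during the reduction: one must observe that the substitution is a congruence modulo the ideal $\Gqk$ (not merely modulo the single generator $X_i^q - X_i$), and that iterating the substitution across all variables terminates. Both points are elementary, so no genuine obstacle is anticipated; the proof is essentially a division-with-remainder argument tailored to the generators of $\Gqk$.
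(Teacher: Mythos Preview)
Your proof is correct and follows essentially the same approach as the paper: uniqueness from \eqref{eq:RG}, and existence by reducing monomials via the congruence $X_i^{a_i} \equiv X_i^{a_i-(q-1)} \pmod{\Gqk}$ whenever $a_i \ge q$. The only difference is cosmetic---you make the termination explicit via the quantity $\sum_i \max(a_i-(q-1),0)$, whereas the paper simply says ``continuing in this way.''
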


\begin{proof} The uniqueness is clear from \eqref{eq:RG} since both $\Red$ and $\Gqk$ are clearly vector spaces over $k$. To prove the existence, it suffices to take $f$ to be a monomial, say 
$f = X_1^{i_1} \cdots X_n^{i_n}$. If $f$ is not reduced, then $i_j \ge q$ for some $j\in \{1, \dots , n\}$.  
Note that 
$$
X_j^{i_j} = X_j^{i_j -q} (X_j^q - X_j + X_j) \equiv X_j^{i_j - (q -1)}({\rm mod} \ \Gqk). 
$$
Hence $f \equiv X_1^{i_1} \cdots X_{j-1}^{i_{j-1}} X_j^{i_j - (q -1)} X_{j+1}^{i_{j+1}} \cdots X_n^{i_n}({\rm mod} \ \Gqk)$.
Continuing in this way, we see that $f \equiv g ({\rm mod} \ \Gqk)$ for some reduced monomial $g$. 
\end{proof}

\subsection{Affine $\Fq$-Nullstellensatz}
\label{subsec:2.3}
We will continue to use the notation and terminology in \S\,\ref{subsec:2.1} and \S\,\ref{subsec:2.2}. 
The Affine $\Fq$-Nullstellensatz stated in the Introduction is a special case of the theorem below with $k=\Fq$, 
where 
$Z(\Fq)$ coincides with $Z(I)$. 

\begin{theorem}
\label{thm:FFN}
Let $k$ be an algebraic extension of $\Fq$. Then: 
\begin{enumerate}
\item[(i)] $\Ii (\AA^n(\Fq)) = \Gqk$. 
\item[(ii)] If $Z$ is an affine algebraic variety in $\AA^n_k$ defined over $\Fq$ 
and $Z = \Zv(I)$ for some ideal $I$ of $S$ generated by polynomials in $\Fq[X_1, \dots , X_n]$,  then 
$$
\Ii(Z(\Fq)) = I + \Gqk.
$$ 
 \end{enumerate}
 \end{theorem}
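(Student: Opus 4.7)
The plan is to establish (i) directly from the reduction results of \S\,\ref{subsec:2.2}, and then obtain (ii) by identifying $S/\Gqk$ with the ring of $k$-valued functions on $\Fq^n$.

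For (i), the containment $\Gqk \subseteq \Ii(\AA^n(\Fq))$ is immediate since $a^q = a$ for every $a \in \Fq$. For the reverse, I would take any $f \in \Ii(\AA^n(\Fq))$, use Lemma~\ref{lem2} to write $f = g + \gamma$ with $g \in \Red$ and $\gamma \in \Gqk$, and observe that $g = f - \gamma$ still vanishes on all of $\AA^n(\Fq)$. Lemma~\ref{lem1} then forces $g = 0$, so $f = \gamma \in \Gqk$.

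For (ii), the inclusion $I + \Gqk \subseteq \Ii(Z(\Fq))$ is clear, since elements of $I$ vanish on $Z \supseteq Z(\Fq)$ and, by (i), $\Gqk$ vanishes on all of $\AA^n(\Fq) \supseteq Z(\Fq)$. For the reverse, I would introduce the evaluation homomorphism $\Ev : S \to k^{\Fq^n}$ sending $f$ to the tuple $(f(\mathbf{a}))_{\mathbf{a} \in \Fq^n}$. This is a $k$-algebra homomorphism whose kernel is $\Ii(\AA^n(\Fq)) = \Gqk$ by (i), and the dimension equality $\dim_k S/\Gqk = \dim_k \Red = q^n = \dim_k k^{\Fq^n}$ (from Lemma~\ref{lem2}) shows $\Ev$ is surjective. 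Hence $\Ev$ descends to a $k$-algebra isomorphism $S/\Gqk \xrightarrow{\sim} k^{\Fq^n}$.

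Now $k^{\Fq^n}$ is a finite product of copies of the field $k$, so each of its ideals is of the form $\{\phi : \phi|_T = 0\}$ for a unique $T \subseteq \Fq^n$, namely the common zero set of the ideal. Applied to $\Ev(I)$ this subset is exactly $Z(\Fq)$, and applied to $\Ev(\Ii(Z(\Fq)))$ it is again $Z(\Fq)$, so the two ideals coincide. For any $f \in \Ii(Z(\Fq))$ one can then pick $h \in I$ with $\Ev(h) = \Ev(f)$, whence $f - h \in \ker \Ev = \Gqk$ and $f \in I + \Gqk$. The only mildly delicate point in the argument is the classification of ideals in a finite product of fields, but this is entirely routine, so I do not anticipate any serious obstacle.
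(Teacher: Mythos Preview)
Your proof of part (i) is exactly the paper's argument: write $f=g+\gamma$ via Lemma~\ref{lem2} and kill $g$ with Lemma~\ref{lem1}.

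For part (ii) your route is correct but genuinely different from the paper's. The paper works constructively: from generators $f_1,\dots,f_r\in\Fq[X_1,\dots,X_n]$ of $I$ it forms
\[
g:=1-\prod_{i=1}^r\bigl(1-f_i^{\,q-1}\bigr)\in I,\qquad h:=1-g,
\]
so that on $\Fq^n$ the pair $(g,h)$ is the characteristic function of $\AA^n(\Fq)\setminus Z(\Fq)$ and of $Z(\Fq)$ respectively; then for $f\in\Ii(Z(\Fq))$ one has $fg\in I$ and $fh\in\Ii(\AA^n(\Fq))=\Gqk$, whence $f=fg+fh\in I+\Gqk$. Your argument instead passes through the isomorphism $S/\Gqk\cong k^{\Fq^n}$ (obtained from surjectivity of $\Ev$ via the dimension count $\dim_k\Red=q^n$) and the classification of ideals in a finite product of fields.

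Each approach buys something. The paper's is fully explicit: it hands you a concrete decomposition $f=fg+fh$ without invoking any structure theory. Your approach is more structural and, notably, never uses the hypothesis that $I$ is generated by polynomials with coefficients in $\Fq$; the identity $\Ii(Z(\Fq))=I+\Gqk$ follows for \emph{any} ideal $I$ of $S$ once one interprets $Z(\Fq)$ as $\Zv(I)\cap\Fq^n$. By contrast, the paper's construction genuinely needs $f_i(\mathbf{a})\in\Fq$ for $\mathbf{a}\in\Fq^n$ so that $f_i(\mathbf{a})^{q-1}\in\{0,1\}$. So your argument in fact yields a slight strengthening of the stated theorem.
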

 
\begin{proof} 
(i) The inclusion $\Gqk \subseteq  \Ii (\AA^n(\Fq))$ is obvious. 
Now  suppose 
$f \in  \Ii (\AA^n(\Fq))$. By Lemma~\ref{lem2}, we can write $f = g + \gamma$ for some $g\in \Red$ and $\gamma\in \Gqk$. But then $g = f - \gamma$ vanishes on  $\AA^n(\Fq)$ and so by Lemma~\ref{lem1}, $g=0$. Thus $f\in \Gqk$. 

(ii)  Let $I$ be an ideal of $S$ generated by polynomials in $\Fq[X_1, \dots , X_n]$  and let $Z = \Zv(I)$. Evidently, $I + \Gqk \subseteq \Ii(Z(\Fq))$. To prove the reverse inclusion, first note that by Hilbert basis theorem, $I = \langle f_1, \dots , f_r  \rangle$ for some $f_1, \dots , f_r \in \Fq[X_1, \dots , X_n]$. Let 
us consider
$$
g:= 1 - \big(1-f_1^{q-1}\big) \cdots \big(1-f_r^{q-1}\big) \quad \text{and} \quad h: = 1- g. 
$$
It is clear that $g, h \in  \Fq[X_1, \dots , X_n]$ and $g\in I$. Moreover, 
$$
g(\mathbf{a}) = \begin{cases} 0 & \text{if }  \mathbf{a}\in Z(\Fq) \\ 1 & \text{if } \mathbf{a}\in \AA^n(\Fq) \setminus Z(\Fq) \end{cases}
 \quad \text{and} \quad
 h(\mathbf{a}) = \begin{cases} 1 & \text{if }  \mathbf{a}\in Z(\Fq) \\ 0 & \text{if } \mathbf{a}\in \AA^n(\Fq) \setminus Z(\Fq). \end{cases}
$$
Now let $f\in \Ii(Z(\Fq))$. Then $fg \in I$ and $fh \in  \Ii (\AA^n(\Fq))$. Since $g+h = 1$, we see from (i) above that 
$f = fg + fh \in I + \Gqk$. Thus $\Ii(Z(\Fq)) = I + \Gqk$. 
\end{proof}

\begin{remark}
An immediate corollary of part (ii) of the above theorem is that for any ideal $I$ of $S$ generated by polynomials in $\Fq[X_1, \dots , X_n]$, the ideal $I+\Gqk$ is a radical ideal. This particular fact can also be deduced from Seidenberg's Lemma given in Article 92 of \cite{Sei}. 
\end{remark}

\subsection{Application to Affine Hypersurfaces over Finite Fields}
\label{subsec:2.4}
Let $k$ be an arbitrary field and as before, let $S$ denote the polynomial ring $k[X_1, \dots , X_n]$. 
Also, let $\Mon$ denote the set of all monomials in $S$ (including the constant monomial $1$). Fix a monomial order on $\Mon$, i.e., a total order $\preccurlyeq$ on $\Mon$ satisfying (i) $1 \preccurlyeq \mu$ for all $\mu \in \Mon$ and 
(ii)  $\mu_1 \preccurlyeq \mu_2$ $\Rightarrow$  $\nu \mu_1 \preccurlyeq \nu \mu_2$ for all $\mu_1,  \mu_2, \nu \in \Mon$. For $0 \ne f\in S$, let $\In (f)$ denote the largest monomial (w.r.t. $\preccurlyeq$) appearing in $f$ with a nonzero coefficient; this is called the \emph{leading monomial} or the \emph{initial monomial} of $f$ (w.r.t. 
$\preccurlyeq$). 
For any subset $I$ of $S$, define the \emph{footprint} of $I$ to be the set $\Delta (I)$
of all monomials in $\Mon$ that are not divisible by the leading monomials of any nonzero element of $I$, i.e.,  
$$
\Delta (I) := \{ \mu \in \Mon : \In(f) \nmid \mu \text{ for all } f\in I \text{ with } f\ne 0\}.
$$
If $I \subseteq S$ is finite, say $I=\{f_1, \dots , f_r\}$, then we may 
write $\Delta(I)$ as $\Delta (f_1, \dots , f_r)$. 

The following result due to Buchberger is classical and is easily derived from the division algorithm (w.r.t. $\preccurlyeq$) in $S$. See, for example, Prop. 1 in Ch. 5, \S\,3 of \cite{CLO}.

\begin{proposition}
\label{prop:muI}
$\{\mu + I : \mu \in \Delta (I)\}$ is a $k$-basis of $S/I$ for any ideal $I$ of $S$. 
\end{proposition}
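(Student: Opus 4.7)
The plan is to verify two things separately: (i) the cosets $\mu + I$ with $\mu \in \Delta(I)$ are linearly independent over $k$, and (ii) they span $S/I$. Linear independence will fall out directly from the definition of $\Delta(I)$, while spanning will be obtained via a termination-of-reduction argument in the spirit of the division algorithm.

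For (i), I would suppose a finite linear combination $f := \sum c_\mu \mu$ lies in $I$, where the sum is over a finite subset of $\Delta(I)$ and not all $c_\mu \in k$ are zero. Taking $\mu_0$ to be the $\preccurlyeq$-largest monomial with $c_{\mu_0} \ne 0$, one has $\In(f) = \mu_0$ and $f \ne 0$, so $\mu_0 = \In(f)$ divides $\mu_0 \in \Delta(I)$. This directly contradicts the defining property of $\Delta(I)$, forcing all $c_\mu$ to vanish.

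For (ii), I would show that every $f \in S$ is congruent modulo $I$ to some $\widetilde f$ with $\supp(\widetilde f) \subseteq \Delta(I)$. Let $T(f) := \supp(f) \setminus \Delta(I)$. If $T(f) = \emptyset$ we are done; otherwise pick the $\preccurlyeq$-largest $\nu \in T(f)$. Since $\nu \notin \Delta(I)$, there is a nonzero $h \in I$ with $\In(h) \mid \nu$, say $\nu = \tau\,\In(h)$ for some $\tau \in \Mon$. Subtracting an appropriate scalar multiple of $\tau h$ from $f$ produces
\[
f' \;=\; f \;-\; \frac{c_\nu}{c_h}\,\tau\, h \;\in\; f + I,
\]
in which the monomial $\nu$ has been cancelled and only monomials $\prec \nu$ have been introduced (because $\tau\,\In(h) = \nu$ is the leading monomial of $\tau h$). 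In particular, the $\preccurlyeq$-largest element of $T(f')$ is strictly smaller than $\nu$.

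The only real subtlety, and thus the main obstacle, is showing that iterating this procedure terminates. The key input is that any monomial order on $\Mon$ is automatically a \emph{well-order} (equivalently, Dickson's Lemma), so the strictly $\prec$-decreasing sequence of ``largest bad monomial'' cannot continue forever and must reach $T(\widetilde f) = \emptyset$ in finitely many steps. This yields $\widetilde f \equiv f \pmod{I}$ with $\supp(\widetilde f) \subseteq \Delta(I)$, and combined with (i) gives the claimed $k$-basis.
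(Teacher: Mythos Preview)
Your proof is correct and follows exactly the approach the paper points to: the paper does not give its own argument but simply remarks that the result ``is easily derived from the division algorithm (w.r.t.\ $\preccurlyeq$) in $S$'' and cites \cite[Ch.~5, \S 3, Prop.~1]{CLO}. Your part (i) is the standard leading-monomial argument for linear independence, and your part (ii) is precisely the division/reduction step together with well-ordering of $\preccurlyeq$ for termination, which is what the cited reference does.
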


We can use this and a variant of 
Lagrange interpolation to derive a useful bound for the number of $\Fq$-rational points of affine algebraic varieties defined over $\Fq$.

\begin{lemma}[Affine $\Fq$-Footprint Bound] 
\label{lem:AFB}
Let $k$ be an algebraic extension of $\Fq$ and let $I$ be an ideal of $S:= k[X_1, \dots , X_n]$ generated by some nonzero polynomials $f_1, \dots , f_r \in \Fq[X_1, \dots , X_n]$. Also let $Z = \Zv(I)$ denote the corresponding affine algebraic variety in $\AA^n_k$ defined over $\Fq$. Then 
$$
|Z(\Fq)| \le \left|\overline{\Delta} (f_1, \dots , f_r)\right|, 
$$
where $\overline{\Delta} (f_1, \dots , f_r): = \{\mu \in \Mon: \mu \text{ is reduced and } \In(f_i) \nmid \mu \text{ for } i=1, \dots , r \}$.
\end{lemma}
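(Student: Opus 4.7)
My plan is to sandwich $|Z(\Fq)|$ using $\dim_k S/(I+\Gqk)$ as the intermediate quantity: first identify this dimension with $|Z(\Fq)|$ via a Lagrange interpolation argument, and then bound it above by $|\overline{\Delta}(f_1,\ldots,f_r)|$ using Proposition~\ref{prop:muI}.

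For the first step, I would consider the evaluation map $\Ev\colon S \to k^{Z(\Fq)}$ sending $f$ to the tuple $(f(\mathbf{a}))_{\mathbf{a}\in Z(\Fq)}$. Its kernel is by definition $\Ii(Z(\Fq))$, which by Theorem~\ref{thm:FFN}(ii) equals $I+\Gqk$. Thus $\Ev$ descends to an injection $S/(I+\Gqk) \hookrightarrow k^{Z(\Fq)}$, and it remains to show surjectivity. For this I would write down explicit indicator polynomials: for each $\mathbf{a}=(a_1,\ldots,a_n)\in Z(\Fq)\subseteq \Fq^n$, set
$$
\chi_\mathbf{a} \;:=\; \prod_{i=1}^n\,\prod_{c\in \Fq\setminus\{a_i\}} \frac{X_i - c}{a_i - c}\;\in\; \Fq[X_1,\ldots,X_n].
$$
A direct check shows $\chi_\mathbf{a}(\mathbf{b})=\delta_{\mathbf{a},\mathbf{b}}$ for every $\mathbf{b}\in \Fq^n$, so any function $\phi\colon Z(\Fq)\to k$ is the evaluation of $\sum_{\mathbf{a}\in Z(\Fq)}\phi(\mathbf{a})\,\chi_\mathbf{a}\in S$. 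Hence $\dim_k S/(I+\Gqk) = |Z(\Fq)|$.

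For the second step, Proposition~\ref{prop:muI} applied to the ideal $J:=I+\Gqk$ gives $\dim_k S/J = |\Delta(J)|$, so it suffices to establish the containment $\Delta(J)\subseteq \overline{\Delta}(f_1,\ldots,f_r)$. Since $J$ contains each $f_i$ and each $X_j^q-X_j$, a monomial $\mu\in\Delta(J)$ cannot be divisible by $\In(f_i)$ for any $i$, nor by $\In(X_j^q - X_j)$ for any $j$. For any monomial order, axioms (i) and (ii) force $1\prec X_j^{q-1}$, hence $X_j\prec X_j^q$, so $\In(X_j^q-X_j)=X_j^q$. Therefore $X_j^q\nmid\mu$ for every $j$ — that is, $\mu$ is reduced — and $\In(f_i)\nmid\mu$ for every $i$; this is exactly the condition $\mu\in\overline{\Delta}(f_1,\ldots,f_r)$. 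Chaining the two steps yields $|Z(\Fq)|=|\Delta(J)|\le|\overline{\Delta}(f_1,\ldots,f_r)|$.

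The one delicate point I foresee is verifying surjectivity of $\overline{\Ev}$; everything else is essentially formal once Theorem~\ref{thm:FFN}(ii) and Proposition~\ref{prop:muI} are in hand. The surjectivity step is precisely where the hypothesis that we evaluate at $\Fq$-rational points (rather than arbitrary $k$-points) gets used, since the interpolating polynomial $\chi_\mathbf{a}$ requires the coordinates of $\mathbf{a}$ to range over a finite subset of $k$ admitting a product-of-linear-factors annihilator.
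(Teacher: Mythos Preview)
Your proof is correct and follows essentially the same route as the paper: both set $J=I+\Gqk$, invoke Theorem~\ref{thm:FFN}(ii) to identify $J$ with $\Ii(Z(\Fq))$, use a Lagrange-type interpolation to compare $|Z(\Fq)|$ with $\dim_k S/J$, apply Proposition~\ref{prop:muI}, and then observe that $\In(X_j^q-X_j)=X_j^q$ to pass to $\overline{\Delta}(f_1,\dots,f_r)$. The only minor differences are that you establish the equality $|Z(\Fq)|=\dim_k S/J$ by proving surjectivity of the evaluation map (with full-product indicator polynomials $\chi_{\mathbf a}$), whereas the paper is content with the inequality $|Z(\Fq)|\le\dim_k S/J$ obtained from linear independence of a different set of interpolating polynomials; and you prove the containment $\Delta(J)\subseteq\overline{\Delta}(f_1,\dots,f_r)$ directly, while the paper goes through the intermediate equality $\Delta(f_1,\dots,f_r,X_1^q-X_1,\dots,X_n^q-X_n)=\overline{\Delta}(f_1,\dots,f_r)$.
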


\begin{proof} 
Let $I_q := I + \Gqk$. 
It is clear that $Z(I_q) = Z(\Fq)$. In particular, $Z(I_q)$ is a finite subset of $\AA^n_k$, 
say $Z(I_q) = \{\mathbf{a}_1, \dots , \mathbf{a}_m\}$, where $\mathbf{a}_i \ne \mathbf{a}_j$ for $1\le i < j\le m$.  
Write $\mathbf{a}_i = (a_{i1}, \dots , a_{in})$ for $i=1, \dots , m$. 
Fix $i\in \{1, \dots , m\}$. Then for each $j\in \{1, \dots , m\}$ with $j\ne i$, there is $t_j \in \{1, \dots , n\}$ such that $a_{it_j} \ne a_{j t_j}$. Consider 
$$
p_i(X_1, \dots , X_n) = \mathop{\prod_{1\le j \le m}}_{j\ne i} \frac{X_{t_j} - a_{j t_j}}{ a_{i t_j} - a_{j t_j}}. 
$$
The polynomials $p_1, \dots , p_m \in \Fq[X_1, \dots , X_n]$ thus obtained have the property that 
$p_i(\mathbf{a}_j) = \delta_{ij}$ for $i,j=1, \dots , m$. 
Moreover, $I_q = I(Z(\Fq))$, by part (ii) of Theorem~\ref{thm:FFN}. Hence if $\sum_{j=1}^m \lambda_j p_j \in I_q$ for some $\lambda_1, \dots, \lambda_m\in k$, then by evaluating at $\mathbf{a}_i$, we obtain $\lambda_i=0$ for $i=1, \dots , m$. It follows that $\{p_1+I_q, \dots , p_m+I_q\}$ is a $k$-linearly independent subset of $S/I_q$. Thus, by Proposition~\ref{prop:muI}, we see that 
$$
|Z(\Fq)| = m \le \dim_k S/I_q = |\Delta (I_q)| \le \left| \Delta(f_1, \dots , f_r, X_1^q - X_1, \dots , X_n^q - X_n)\right|.
$$
Finally, since $\In(X_i^q - X_i) = X_i^q$ for each $i=1, \dots , n$, it is clear that $\mu \in \Mon$ is reduced if and only if $\In(X_i^q - X_i)\nmid \mu$ for all $i=1, \dots , n$. This readily implies that 
$\Delta(f_1, \dots , f_r, X_1^q - X_1, \dots , X_n^q - X_n) = \overline{\Delta} (f_1, \dots , f_r)$. 
\end{proof}

As a corollary, we shall deduce an inequality that according to \cite[p. 320]{LN}, goes back at least to Ore (1922) and provides an effective bound on the number of $\Fq$-rational points of an affine hypersurface defined over $\Fq$ in terms of its degree. It can also be viewed as a multivariable generalization of the elementary fact that a univariate polynomial of degree $d$ with coefficients in a field has at most $d$ roots in that field. The generalization is of course possible when the base field is finite. 

\begin{corollary}[Ore's Inequality]
\label{cor:ore}
Let $f\in \Fq[X_1, \dots , X_n]$ be a nonzero polynomial of degree $d$ and let $Z= Z(f)$ be the corresponding variety in $\AA^n_k$, where $k$ is any algebraic extension of $\Fq$. Then 
$$
|Z(\Fq)| \le dq^{n-1}. 
$$
\end{corollary}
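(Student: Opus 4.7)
The strategy is to apply the Affine $\Fq$-Footprint Bound (Lemma~\ref{lem:AFB}) to the principal ideal $\langle f\rangle$, which immediately gives $|Z(\Fq)|\le |\overline{\Delta}(f)|$. The task thus reduces to estimating $|\overline{\Delta}(f)|$ in terms of $d$ alone. Fix any monomial order $\preccurlyeq$ and write $\In(f)=X_1^{a_1}\cdots X_n^{a_n}$; since this monomial actually appears in $f$, its exponents satisfy $a_1+\cdots+a_n\le d$.

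To count $\overline{\Delta}(f)$, I would count reduced monomials divisible by $\In(f)$ and subtract the result from the total $q^n$ of all reduced monomials. If some $a_i\ge q$, then $\In(f)$ divides no reduced monomial at all, so $|\overline{\Delta}(f)|=q^n$, and the desired inequality is immediate from $d\ge a_i\ge q$. Otherwise every $a_i$ lies in $\{0,1,\dots,q-1\}$, and a direct count of tuples $(b_1,\dots,b_n)$ with $a_i\le b_i\le q-1$ yields
$$
|\overline{\Delta}(f)|=q^n-\prod_{i=1}^n(q-a_i).
$$

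The proof is therefore reduced to the elementary inequality
$$
\prod_{i=1}^n(q-a_i)\;\ge\;q^n-\left(\sum_{i=1}^n a_i\right)q^{n-1}\qquad\text{for } 0\le a_i\le q-1,
$$
which I expect to be the only technical point. I plan to prove it by induction on $n$, with the case $n=1$ an equality; in the inductive step, multiplying the $(n-1)$-variable bound by the nonnegative factor $(q-a_n)$ produces the desired right-hand side plus a cross term of the form $a_n\left(\sum_{i<n}a_i\right)q^{n-2}$, which is visibly nonnegative. Plugging this back into the formula for $|\overline{\Delta}(f)|$ yields $|\overline{\Delta}(f)|\le\left(\sum_i a_i\right)q^{n-1}\le dq^{n-1}$, and Ore's inequality follows. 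I do not foresee any obstacle beyond this inductive estimate, which is a short direct calculation.
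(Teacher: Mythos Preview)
Your proposal is correct and follows essentially the same route as the paper: apply the Affine $\Fq$-Footprint Bound, count reduced monomials divisible by $\In(f)$ as $\prod_i(q-a_i)$, and use the inductive inequality $\prod_i(q-a_i)\ge q^n-(\sum_i a_i)q^{n-1}$ to conclude. The only cosmetic difference is that the paper disposes of the case $d\ge q$ at the outset (so that all $a_i\le d<q$ automatically), whereas you handle the possibility of some $a_i\ge q$ inside the counting argument; the two treatments are equivalent.
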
 

\begin{proof} 
The inequality is trivial 
if $d\ge q$ because then  $dq^{n-1} \ge |\AA^n(\Fq)| \ge |Z(\Fq)|$. 
Assume that $d< q$. This implies, in particular, that $f$ is reduced. Fix a monomial order $\preccurlyeq$ on the set $\Mon$ of all monomials in $S:=k[X_1, \dots , X_n]$, and 
write 
$\In(f) =X_1^{i_1}\cdots X_n^{i_n}$, where $i_1, \dots , i_n$ are nonnegative integers such that $i_1+ \dots + i_n \le d$. Note that 
$0\le i_j \le d \le q-1$ for $j=1, \dots , n$. By Lemma~\ref{lem:AFB}, 
$$
|Z(\Fq)| \le |\overline{\Delta}(f)| = 
\left|\{ \mu \in \Mon: \mu \text{ is reduced and } X_1^{i_1}\cdots X_n^{i_n} \nmid \mu\} \right|.
$$
Now a monomial $\mu = X_1^{j_1}\cdots X_n^{j_n}$ is reduced and is divisible by $X_1^{i_1}\cdots X_n^{i_n}$ 
if and only if $i_t \le j_t \le q-1$ for all $t=1, \dots , n$. The number of such monomials is therefore
$ (q-i_1) \cdots (q-i_n)$. An easy induction on $n$ shows that
$$
 (q-i_1) \cdots (q-i_n) \ge q^n - (i_1+ \cdots +i_n)q^{n-1} \quad \text{ whenever } 
 0 \le i_j < q \text{ for } j =1, \dots , n.
 $$
Since the number of reduced monomials in $\Mon$ is clearly $q^n$, it follows that 
$$
 |Z(\Fq)| \le |\overline{\Delta}(f)| \le q^n - \left( q^n - d q^{n-1} \right) = d q^{n-1}, 
 $$
 where we have used the fact that $i_1+ \dots + i_n \le d$.
\end{proof}
%
%
%

\section{Projective Version}
\label{sec:proj}

In this section, we will try to develop projective analogues of the results in the first three subsections of Section~\ref{sec:affine}. Our treatment will, in fact,  be very parallel to that in \S \ref{subsec:2.1}-\ref{subsec:2.3}.

\subsection{Preliminaries}
\label{subsec:3.1}
Let $k$ be a field. 
The projective $n$-space over $k$ will be denoted by $\PP^n_k$ and it is 
the set of equivalence classes of elements of the set $k^{n+1} \setminus \{\mathbf{0}\}$ 
w.r.t. the equivalence relation $\sim$ given by proportionality, i.e., for all $\mathbf{a} = (a_0, a_1, \dots , a_n)$ and 
$\mathbf{b} = (b_0, b_1, \dots , b_n)$ in $k^{n+1} \setminus \{\mathbf{0}\}$, 
$$
\mathbf{a} \sim \mathbf{b} 
\Longleftrightarrow \text{ there is $\lambda \in k$ such that $a_i = \lambda b_i$ for all $i=0,1,\dots , n$}. 
$$ 
We may denote by $[a_0:a_1: \dots : a_n]$
the equivalence class of $(a_0, a_1, \dots , a_n)$ in $k^{n+1} \setminus \{\mathbf{0}\}$. We let $\Sr$ be the polynomial ring $k[X_0, X_1, \dots , X_n]$ in $n+1$ variables. Given a subset $\I$ of $\Sr$ consisting of homogeneous polynomials, we let 
$$
\V(\I) : = \left\{ [a_0:a_1: \dots : a_n] \in \PP^n_k : f(a_0, a_1, \dots , a_n) = 0 \text{ for all } f\in \I\right\}. 
$$
Given a subfield $F$ of $k$, we call a subset $V$ of $\PP^n_k$ a \emph{projective algebraic variety} defined over $F$ if $V = \V(\I)$ for some subset $\I$ of homogeneous polynomials in  $F[X_0, X_1, \dots , X_n]$. This is equivalent to saying that $V = \V(\I)$ for some homogeneous ideal $\I$ of $S$ generated by finitely many homogeneous polynomials in $F[X_0, X_1, \dots , X_n]$. 
If $V$ a projective algebraic variety defined over $F$ and if $K$ is a field extension of $F$ such that 
 $K$ is a subfield of an algebraic closure $\overline{k}$ of $k$,
 then we denote by $V(K)$ the set of $K$-rational points of $V$, i.e., 
$$
V(K):=\{[a_0: \dots : a_n] \in \PP^n_{K} : f(a_0,  \dots , a_n) = 0 \text{ for all  homogeneous } f \in \I\}.
$$ 
Given a subset $V$ of $\PP^n_k$, the  \emph{vanishing ideal} of $V$ is defined to be the ideal $\Ii(V)$ of $\Sr$ generated by the 
homogeneous polynomials in $\Sr$ that vanish at every point of $V$.
It is not difficult to see that if $V$ is any projective algebraic variety defined over $k$, then $\V(\Ii(V)) = V$, while 
the projective analogue of \eqref{SN} 
is the following. 

\smallskip

\noindent
{\bf Projective Nullstellensatz}: 
If $k$ is algebraically closed and if $\I$ is any homogeneous ideal of $\Sr$, then $\sqrt{\I} \supseteq \langle X_0, X_1, \dots , X_n\rangle$ if $\V(\I)$ is empty, whereas $\Ii(\V (\I) ) = \sqrt{\I}$ if $\V(\I)$ is nonempty.

In particular, we see that there are nonunit homogeneous ideals of $\Sr$ that have no `zero' in $\PP^n_k$,  even when $k$ is algebraically closed. 
Thus a straightforward analogue of the Weak Nullstellensatz isn't quite true for projective varieties. On the other hand, the other special case $\V(\I) = \PP^n_k$ still yields the projective analogue of VWN, which says that if $k$ is algebraically closed, then the only homogeneous polynomial that vanishes on all of $\PP^n_k$ is the zero polynomial. As before, this is valid more generally (and proved rather easily) when $k$ is any infinite field. But when $k$ is a finite field, such a result is not true and we 
 discuss next what happens in this case. 

\subsection{Projective reduction}
\label{subsec:3.2}
We follow \cite{BDG} to outline here a projective analogue of the notion of reduction that was discussed in \S\,\ref{subsec:2.2}.  In this section, let $k$ be an algebraic extension of $\Fq$ and let $\Sr:=k[X_0, X_1, \dots , X_n]$. Given a nonnegative integer $d$, let $\Sr_d$ denote the set of homogeneous polynomials in $\Sr$ of degree $d$ (including the zero polynomial). 
We will denote by $\Monp$ the set of all monomials in $\Sr$. A monomial $\mu \in \Monp$ is said to be \emph{projectively reduced} if either $\mu =1$ or $\mu \ne 1$ and $\deg_{X_i} \mu \le q-1$ for $1\le i < \ell_{\mu}$, where $\ell_{\mu}$ is the index of the last variable appearing in $\mu$, i.e., $\ell_{\mu}:= \max\{\ell \in \{0,1, \dots , n\} : X_{\ell} \mid \mu\}$. Given a nonnegative integer $d$, we let $\Redp_d$ denote the set of $k$-linear combinations of projectively reduced monomials in $\Monp$ of degree $d$. Clearly $\Redp_d$ is a finite dimensional vector space over $k$ and its elements may be called projectively reduced homogeneous  polynomials of degree $d$. 
The next two results are projective analogues of Lemmas~\ref{lem1} and \ref{lem2}. 

\begin{lemma}
\label{lemp1}
Let $d$ be a nonnegative integer. If $f\in \Redp_d$ is such that $f(P) = 0$ for all $P\in \PP^n(\Fq)$, then $f$ is the zero polynomial. In other words, $\Redp_d \cap \Ii( \PP^n(\Fq)) = \{0\}$.
\end{lemma}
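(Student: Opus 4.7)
The plan is to prove Lemma \ref{lemp1} by induction on $n$, paralleling the inductive strategy of Lemma \ref{lem1} but replacing the splitting into $X_n$-slices by the standard cone decomposition of projective space. The base case $n = 0$ will be immediate: $\PP^0(\Fq) = \{[1]\}$ and $\Redp_d = k \cdot X_0^d$, so vanishing at $[1]$ forces the sole coefficient to be zero.

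For the inductive step, I would write every $f \in \Redp_d$ according to its degree in $X_n$, namely
$$ f \;=\; f_0 \;+\; \sum_{j=1}^{d} f_j \, X_n^{\,j}, $$
where $f_0 \in k[X_0, \ldots, X_{n-1}]$ collects the monomials with $X_n$ absent and each $f_j$ (for $j \geq 1$) is homogeneous of degree $d - j$ in $X_0, \ldots, X_{n-1}$. The key structural remark to verify is that projective reducedness of $f$ forces two different flavors of reducedness on the pieces: every monomial of $f_0$ has $\ell_\mu < n$, so $f_0$ is itself projectively reduced of degree $d$ in the smaller ring $k[X_0, \ldots, X_{n-1}]$, whereas every monomial of $f_j\,X_n^{\,j}$ (for $j \geq 1$) has $\ell_\mu = n$, which forces each $f_j$ to be \emph{affinely} reduced in the sense of \S\,\ref{subsec:2.2}, i.e., every $X_i$ with $i < n$ has degree at most $q - 1$ in each monomial of $f_j$.

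Next I would exploit the decomposition $\PP^n(\Fq) = H_\infty \sqcup U$, where $H_\infty := \{X_n = 0\} \cong \PP^{n-1}(\Fq)$ and $U := \{X_n \ne 0\}$ is parametrized by the representatives $[a_0 : \cdots : a_{n-1} : 1]$ with $(a_0, \ldots, a_{n-1}) \in \Fq^n$. Vanishing of $f$ on $H_\infty$ will give that $f_0$ vanishes on $\PP^{n-1}(\Fq)$, and the inductive hypothesis applied to $f_0$ will then yield $f_0 = 0$ as a polynomial. Vanishing on $U$ will then force $F := f_1 + f_2 + \cdots + f_d$ to vanish on all of $\AA^n(\Fq)$; since $F$ is affinely reduced, Lemma \ref{lem1} (applied to $F$ viewed as a polynomial in the $n$ variables $X_0, \ldots, X_{n-1}$) will yield $F = 0$ as a polynomial. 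Finally, since the summands $f_j$ are homogeneous of pairwise distinct degrees $d-1, d-2, \ldots, 0$, the identity $F = 0$ will force each $f_j = 0$ separately, and therefore $f = 0$.

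The main subtlety — the step I expect to require most care — is the structural observation about the decomposition: verifying that the definition of projective reducedness naturally splits $f$ into a projectively reduced \emph{head} $f_0$ and affinely reduced \emph{tails} $f_j$ (for $j \geq 1$), in a way that mirrors exactly the $H_\infty \sqcup U$ partition of $\PP^n(\Fq)$. Once this bookkeeping is in place, the two halves of the induction close cleanly, with the inductive hypothesis handling the hyperplane at infinity and Lemma \ref{lem1} handling the affine chart.
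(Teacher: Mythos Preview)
Your proposal is correct and follows essentially the same approach as the paper: both split $f$ into the part $f_0$ not involving $X_n$ (projectively reduced in one fewer variable, handled by induction on the hyperplane $\{X_n=0\}$) and the part divisible by $X_n$ (whose dehomogenization at $X_n=1$ is affinely reduced and is killed by Lemma~\ref{lem1} on the affine chart). The only cosmetic difference is that the paper packages your $\sum_{j\ge 1} f_j X_n^{j}$ as a single homogeneous $h$ and notes that $h(X_0,\dots,X_{n-1},1)=0$ together with homogeneity forces $h=0$, whereas you unpack this via the distinct-degree argument on the $f_j$.
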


\begin{proof}
The case when $n=0$ 
is trivial. Suppose $n>0$ and the result holds for homogeneous polynomials of degree $d$ in $k[X_0, X_1, \dots , X_{n-1}]$. Let $f\in \Redp_d \cap \Ii( \PP^n(\Fq)) $.
By separating terms involving $X_n$, 
we can write
$$
f(X_1, \dots , X_n) = g(X_0, \dots , X_{n-1}) + h(X_0, \dots , X_n) 
$$
where $g\in k[X_0, \dots , X_{n-1}]$ and $h\in k[X_0, \dots , X_n]$ are homogeneous of degree $d$ such that the last variable in each of the monomials appearing in $h$ (with a nonzero coefficient) is $X_n$. Considering points $[a_0: \dots : a_n]$ of $\PP^n(\Fq)$ such that $a_n=0$, we deduce from the induction hypothesis that $g(X_0, \dots , X_{n-1})$ is the zero polynomial. On the other hand, the dehomogenization $h(X_0, \dots , X_{n-1}, 1)$ is a reduced (and not necessarily homogeneous) polynomial in $k[X_0, \dots , X_{n-1}]$ that vanishes on $\AA^n(\Fq)$. Hence by Lemma~\ref{lem1}, $h(X_0, \dots , X_{n-1}, 1)$ is  the zero polynomial. Since $h$ is a homogeneous polynomial divisible by $X_n$, it follows that $h$ 
is also the zero polynomial.
\end{proof}

Now let us define $\Gpqk$ to be the ideal of $\Sr$ 
generated by the ${n+1}\choose{2}$ \emph{Fermat polynomials} $X_i^q X_j -  X_i X_j^q$, where $0\le i < j \le n$. 
Clearly, $\Gpqk$ is a homogeneous ideal of $\Sr$ and  $\Gpqk \subseteq  \Ii( \PP^n(\Fq))$. For any $d\ge 0$, we let $\Gpqk_d := \Gpqk\cap \Sr_d$. 
From Lemma~\ref{lemp1}, we see that 
\begin{equation}
\label{eq:PRG}
\Redp_d \cap \Gpqk_d = \{0\} \quad \text{for every nonnegative integer } d.
\end{equation}

\begin{lemma}
\label{lemp2}
Let $d$ be a nonnegative integer and let $f\in \Sr_d$. 
Then 
$f = g + \gamma$ for unique 
$g\in \Redp_d$ and $\gamma \in \Gpqk_d$. Consequently, $\Sr_d = \Redp_d \oplus \Gpqk_d$. 
\end{lemma}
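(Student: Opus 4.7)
The plan is to mirror the affine argument from Lemma~\ref{lem2}, but using the Fermat generators of $\Gpqk$ instead of the simple generators of $\Gqk$. The uniqueness part of the decomposition is already taken care of: since both $\Redp_d$ and $\Gpqk_d$ are $k$-subspaces of $\Sr_d$, relation \eqref{eq:PRG} gives $\Redp_d \cap \Gpqk_d = \{0\}$, so any two decompositions $f = g_1 + \gamma_1 = g_2 + \gamma_2$ force $g_1 - g_2 = \gamma_2 - \gamma_1 \in \Redp_d \cap \Gpqk_d = \{0\}$.

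For existence, by $k$-linearity it suffices to prove it for a single monomial $\mu = X_0^{i_0} X_1^{i_1} \cdots X_n^{i_n} \in \Sr_d$. Write $\ell = \ell_\mu = \max\{j : X_j \mid \mu\}$, so $i_\ell \ge 1$ and $i_j = 0$ for $j > \ell$. If $\mu$ is already projectively reduced we are done. Otherwise there is some index $j < \ell$ with $i_j \ge q$. The key observation is that the Fermat generator $X_j^q X_\ell - X_j X_\ell^q$ divides no restriction on $\mu$ other than $X_j^q X_\ell \mid \mu$, which holds here. So setting $\mu' := \mu \cdot (X_j X_\ell^q)/(X_j^q X_\ell)$, we have
$$
\mu - \mu' \;=\; \frac{\mu}{X_j^q X_\ell}\bigl(X_j^q X_\ell - X_j X_\ell^q\bigr) \;\in\; \Gpqk_d,
$$
and $\mu'$ is a monomial of the same degree $d$, with the exponent of $X_j$ decreased by $q-1$, the exponent of $X_\ell$ increased by $q-1$, and all other exponents unchanged. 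In particular, $\mu'$ still satisfies $\ell_{\mu'} = \ell$ (we have only raised the power of $X_\ell$, not introduced any $X_{\ell+1}, \dots, X_n$).

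Now the termination argument. Define the nonnegative integer invariant $N(\mu) := \sum_{j < \ell_\mu} i_j$. Each reduction step decreases $N$ by exactly $q-1 \ge 1$, so after finitely many steps we arrive at a monomial $g$ of degree $d$ with $\ell_g = \ell$ and $i_j \le q-1$ for all $j < \ell$, i.e., $g \in \Redp_d$; and $\mu - g \in \Gpqk_d$ by construction. Summing over monomials yields the decomposition $f = g + \gamma$ for arbitrary $f \in \Sr_d$.

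The one subtlety worth watching for, rather than a serious obstacle, is verifying that the rewriting stays inside $\Sr_d$ and inside $\Gpqk_d$ (not just $\Gpqk$); this is automatic because each Fermat generator is homogeneous of degree $q+1$ and we are multiplying it by a monomial of degree $d-q-1 \ge 0$ (note $d \ge q+1$ is guaranteed whenever $\mu$ fails to be reduced, since $i_j \ge q$ and $i_\ell \ge 1$). Once this is noted the proof is essentially a homogeneous repetition of the affine reduction in Lemma~\ref{lem2}.
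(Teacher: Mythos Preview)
Your proof is correct and follows essentially the same approach as the paper: reduce to monomials, and for a non-reduced monomial use the Fermat relation $X_j^q X_\ell - X_j X_\ell^q$ to shift $q-1$ from the exponent of $X_j$ to that of $X_\ell$, iterating until projectively reduced. Your write-up is in fact slightly more careful than the paper's, since you give an explicit termination invariant $N(\mu)=\sum_{s<\ell_\mu} i_s$ and you verify that the difference lands in $\Gpqk_d$ (not just $\Gpqk$) by checking $d\ge q+1$ whenever a reduction step is needed.
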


\begin{proof} The uniqueness is clear from \eqref{eq:PRG} since both $\Redp_d$ and $\Gpqk_d$ are clearly vector spaces over $k$. To prove the existence, it suffices to take $f$ to be a nonconstant monomial of degree $d$, say 
$f = X_0^{i_0} \cdots X_{\ell}^{i_\ell}$, where $\ell$ is the index of the last variable in $f$ so that $i_{\ell} > 0$.  If $f$ is not reduced, then $\ell \ge 1$ and $i_j \ge q$ for some $j\in \Z$ with $0\le j < \ell$. 
Now observe that  $X_j^{i_j} X_{\ell}^{i_\ell}$ can be written as  
$$
X_j^{i_j -q}  X_{\ell}^{i_\ell -1 }  \left( X_j^q X_{\ell} - X_j X_{\ell}^{q}  + X_j X_{\ell}^{q}\right) \equiv X_j^{i_j - (q -1)}   X_{\ell}^{i_\ell  +(q-1) }
({\rm mod} \ \Gpqk). 
$$
This implies that 
$$
f \equiv X_1^{i_1} \cdots X_{j-1}^{i_{j-1}} X_j^{i_j - (q -1)} X_{j+1}^{i_{j+1}} \cdots X_{\ell}^{i_\ell  +(q-1) }({\rm mod} \ \Gpqk).
$$
Continuing in this way, we see that $f \equiv g ({\rm mod} \ \Gpqk)$ for some projectively reduced monomial $g$ of degree $d$. Moreover, $\gamma: = f -g$ is necessarily a homogeneous polynomial in 
$\Gpqk$ of degree $d$. 
\end{proof}

\subsection{Vanishing Ideal of Projective Spaces over Finite Fields}
\label{subsec:3.3}
We will continue to use the notation and terminology in \S\,\ref{subsec:3.1} and \S\,\ref{subsec:3.2}. 
The following result is a slightly more general version of a theorem of Mercier and Rolland \cite[Thm. 2.1]{MR}; in fact, it corresponds precisely to \cite[Cor. 2.6]{BDG}. The proof, however, is different from 
that in \cite{MR} or \cite{BDG}.

\begin{theorem}
\label{thm:PFFN}
Let $k$ be an algebraic extension of $\Fq$. Then
$$
\Ii (\PP^n(\Fq)) = \Gpqk.
$$
 \end{theorem}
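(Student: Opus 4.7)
The proof will be a direct projective analogue of the argument used for part (i) of Theorem~\ref{thm:FFN}, so my plan is to mimic that structure, leveraging the decomposition lemma \ref{lemp2} and the vanishing lemma \ref{lemp1} that have already been established.

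First I would settle the easy inclusion $\Gpqk \subseteq \Ii(\PP^n(\Fq))$. Each Fermat generator $X_i^qX_j - X_iX_j^q$ is a homogeneous polynomial, and for any $[a_0:\cdots:a_n]\in \PP^n(\Fq)$ one has $a_i,a_j\in\Fq$, hence $a_i^q=a_i$ and $a_j^q=a_j$, so $a_i^qa_j - a_ia_j^q = 0$. Since $\Gpqk$ is generated by these polynomials, the inclusion follows.

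For the reverse inclusion $\Ii(\PP^n(\Fq)) \subseteq \Gpqk$, since $\Ii(\PP^n(\Fq))$ is a homogeneous ideal generated by the homogeneous polynomials in $\Sr$ vanishing on $\PP^n(\Fq)$, it suffices to show that every homogeneous $f\in \Ii(\PP^n(\Fq))$ of a given degree $d$ lies in $\Gpqk$. Fixing such an $f\in \Sr_d$, Lemma~\ref{lemp2} gives a unique decomposition $f = g + \gamma$ with $g\in\Redp_d$ and $\gamma\in \Gpqk_d$. The already-proven inclusion $\Gpqk \subseteq \Ii(\PP^n(\Fq))$ shows $\gamma$ vanishes on $\PP^n(\Fq)$, hence $g = f - \gamma$ also vanishes there. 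Applying Lemma~\ref{lemp1} to the projectively reduced $g\in \Redp_d$ forces $g = 0$, and therefore $f = \gamma \in \Gpqk$, as required.

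Because both \ref{lemp1} and \ref{lemp2} are in hand, there is essentially no obstacle: the entire argument is a few lines once one notes that $f$ being homogeneous lets us work one degree at a time and invoke the direct-sum decomposition $\Sr_d = \Redp_d \oplus \Gpqk_d$. The only mildly delicate point to state carefully is that $\Ii(\PP^n(\Fq))$ is by definition the ideal generated by the \emph{homogeneous} vanishing polynomials, so proving the containment for each graded piece is enough to conclude. No further machinery (such as the $1 - \prod(1 - f_i^{q-1})$ trick from part (ii) of Theorem~\ref{thm:FFN}) is needed here, since we are only addressing the analogue of part (i).
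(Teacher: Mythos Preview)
Your proposal is correct and follows essentially the same approach as the paper's own proof: establish the obvious inclusion $\Gpqk \subseteq \Ii(\PP^n(\Fq))$, then for a homogeneous $f\in\Ii(\PP^n(\Fq))$ of degree $d$ use Lemma~\ref{lemp2} to write $f=g+\gamma$ with $g\in\Redp_d$, $\gamma\in\Gpqk_d$, and conclude via Lemma~\ref{lemp1} that $g=0$. Your write-up is slightly more explicit (spelling out why the Fermat generators vanish and why it suffices to handle homogeneous pieces), but the argument is identical in substance.
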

 
\begin{proof} 
The inclusion $\Gpqk \subseteq  \Ii (\PP^n(\Fq))$ is obvious. For the reverse inclusion, let $f$ be a homogeneous polynomial of degree $d$ such that 
$f \in  \Ii (\PP^n(\Fq))$. By Lemma~\ref{lemp2}, we can write $f = g + \gamma$ for some $g\in \Red_d$ and $\gamma\in \Gpqk_d$. But then $g = f - \gamma$ vanishes on  $\PP^n(\Fq)$ and so by Lemma~\ref{lemp1}, $g=0$. Thus $f\in \Gpqk$. 
\end{proof}

Unlike in the affine case, it is not true that if $\I$ is a homogeneous ideal of $\Sr$, then $\Ii (\V(\I)) = \I + \Gpqk$. In fact, while $\Ii(\V(\I))$ is a radical ideal, the ideal $\I + \Gpqk$ need not be a radical ideal even when $\I$ is a radical ideal of $\Sr$. To illustrate this, we reproduce the following example from \cite[Ex. 3.8]{BDG}. 

\begin{example}
\label{ex:JqNotRad}
{\rm 
Suppose $n=1$ and $f(X_0, X_1) : = X_0^q X_1 - X_0X_1^q + X_0^{q+1}$. Consider the principal homogeneous ideal  $\I = \langle f(X_0, X_1)\rangle$ 
of $k[X_0, X_1]$. 
Note that $\I$ is a radical ideal of $\Sr = k[X_0,X_1]$ because $\Sr$ is a UFD and $f$ does not have a multiple root in $\PP^1(k)$. Indeed, 
$f(X_0, X_1) = X_0g(X_0, X_1)$ where $g(X_0, X_1):= X_0^{q-1}X_1 - X_1^q + X_0^q$ does not have $[0:1]$ as a root and also no multiple root of the form $[1:a]$ since the derivative with respect to $X_1$ of $g(1, X_1)$ is never zero. On the other hand, $\I + \Gpqk = \I + \langle  X_0^q X_1 - X_0X_1^q \rangle$ contains $X_0^{q+1}$, but does not contain $X_0$ (since every nonzero element of $\I + \Gpqk$ has degree $\ge q+1$). Thus $\I + \Gpqk$ is not a radical ideal even though $\I$ is a radical ideal. 
}
 \end{example}

\begin{remark}
The nonavailability of a straightforward analogue of part (ii) of Theorem~\ref{thm:FFN} in the projective case makes it harder to find a suitable analogue of the affine $\Fq$-footprint bound (Lemma~\ref{lem:AFB}). Nonetheless, it is shown in \cite{BDG} how a useful projective $\Fq$-footprint bound can be obtained, and as an application an inequality due to Serre for the number of points of projective hypersurfaces  is deduced. This inequality is, in fact, a not-so-straightforward projective analogue of Ore's inequality given in Corollary~\ref{cor:ore}; see \cite{DG} for more on this. 
%
\end{remark}

\section*{Acknowledgements}
The author is grateful to Peter Beelen and Mrinmoy Datta 
for some helpful conversations related to the topics in this article.

%
%

\end{document}